\documentclass[11pt]{amsart} \usepackage{hyphenat} \usepackage[T1]{fontenc} \usepackage[utf8]{inputenc} \usepackage{lmodern} \usepackage{microtype} \usepackage{amsmath,amssymb,amsthm,mathtools} \usepackage{tikz-cd} \usepackage{enumitem} \usepackage{hyperref} \usepackage[nameinlink,capitalise,noabbrev]{cleveref} \hypersetup{ colorlinks=true, linkcolor=blue, citecolor=blue, urlcolor=blue } \newtheorem{theorem}{Theorem}[section] \newtheorem{lemma}[theorem]{Lemma} \newtheorem{corollary}[theorem]{Corollary}  \theoremstyle{definition}  \newtheorem{example}[theorem]{Example} \theoremstyle{remark}  \crefname{theorem}{Theorem}{Theorems} \Crefname{theorem}{Theorem}{Theorems} \crefname{lemma}{Lemma}{Lemmas} \Crefname{lemma}{Lemma}{Lemmas} \crefname{corollary}{Corollary}{Corollaries} \Crefname{corollary}{Corollary}{Corollaries} \crefname{proposition}{Proposition}{Propositions} \Crefname{proposition}{Proposition}{Propositions} \crefname{definition}{Definition}{Definitions} \Crefname{definition}{Definition}{Definitions} \crefname{example}{Example}{Examples} \Crefname{example}{Example}{Examples} \crefname{remark}{Remark}{Remarks} \Crefname{remark}{Remark}{Remarks} \newcommand{\kk}{\Bbbk} \newcommand{\C}{\mathcal C} \newcommand{\A}{\mathcal A} \newcommand{\B}{\mathcal B}  \newcommand{\K}{\mathsf K} \newcommand{\Db}{\mathsf D^{b}} \newcommand{\Dsg}{\mathsf D_{\mathrm{sg}}} \newcommand{\Hom}{\operatorname{Hom}} \newcommand{\Ker}{\operatorname{Ker}} \newcommand{\Coker}{\operatorname{Coker}} \renewcommand{\Im}{\operatorname{Im}} \newcommand{\soc}{\operatorname{soc}} \newcommand{\supp}{\operatorname{supp}} \newcommand{\rep}{\operatorname{rep}} \newcommand{\inj}{\operatorname{inj}} \newcommand{\proj}{\operatorname{proj}} \newcommand{\modu}{\operatorname{mod}} \newcommand{\rad}{\operatorname{rad}} \newcommand{\op}{\mathrm{op}} 

\title[Singularity Categories]{Singularity Categories of Locally Bounded Categories with Radical Square Zero}
\author{Ales M. Bouhada}
\address{D\'epartement de math\'ematiques, Universit\'e de Sherbrooke, Sherbrooke, Qu\'ebec, Canada}
\email{alesm.bouhada@gmail.com}
\subjclass[2010]{16E35, 16G20, 16G70, 18E30, 18E35}
\keywords{Quiver representations, modules over linear categories, triangulated categories, derived categories, singularity categories, Galois coverings}

\begin{document}

\begin{abstract}
We study several singularity categories associated with a locally bounded $\kk$-linear category $\C$ whose radical has square zero. Building on the work of Bautista and Liu \cite{BL2017}, we give explicit descriptions of
\[
\Dsg^{b}(\C),\qquad \Dsg^{b}(\C^{\op}),\qquad
\Dsg^{-}(\proj\text{-}\C),\qquad
\Dsg^{+}(\inj\text{-}\C)
\]
as orbit categories of bounded derived categories of suitable semisimple abelian categories of quiver representations. We conclude with examples illustrating how generators of $\Dsg^{b}(\C)$ can be read directly from the quiver of $\C$.
\end{abstract}

\maketitle

\section*{Introduction}
More recently, Bautista and Liu obtained a complete description of the bounded derived category of a connected locally bounded $\kk$-category by means of Galois coverings \cite{BL2017}. Their construction extends the classical covering theory of Bongartz and Gabriel \cite{BG1982} and was further generalized, independently, by Asashiba \cite{Asashiba1997,Asashiba2011} and by Asashiba--Hafezi--Vahed \cite{AHV2018} to derived and singularity categories. The aim of this paper is to continue this program by studying singularity categories of a connected elementary locally bounded $\kk$-category $\C$, where $\kk$ is a field and $\rad^{2}\C=0$. By Gabriel's theorem, such a category may be identified with \[ \C=\kk Q/(\kk Q^{+})^{2}, \] where $Q$ is a connected locally finite quiver. Chen \cite{Chen2011} proved that, for an Artin algebra with radical square zero, the singularity category is triangle equivalent to the category of finitely generated projective modules over a certain algebra which is itself a triangulated abelian category. We obtain an analogous phenomenon for locally finite quivers, using a different method which applies uniformly to both finite- and infinite-dimensional settings. We study the four singularity categories \[ \Dsg^{b}(\C),\qquad \Dsg^{b}(\C^{\op}),\qquad \Dsg^{-}(\proj\text{-}\C),\qquad \Dsg^{+}(\inj\text{-}\C). \] For a finite-dimensional algebra with radical square zero, the categories $\Dsg^{b}(\C)$ and $\Dsg^{b}(\C^{\op})$ need not be equivalent, although they are equivalent for Nakayama algebras. Our objective is to identify a natural triangulated category admitting a Galois covering functor onto $\Dsg^{b}(\C)$. The appropriate source category arises from a Gabriel--Zisman localization of the category of finitely copresented representations; this localization turns out to be semisimple abelian. The main result is the following. \begin{theorem}\label{thm:main-intro} Assume that $\C$ is a locally bounded category with radical square zero. Let $G$ and $H$ be the groups generated by the automorphisms \[ \vartheta=\rho[-r_Q] \qquad\text{and}\qquad \varphi=\rho^{-1}[r_Q], \] respectively. Then there are triangle equivalences \begin{align*} \Dsg^{b}(\C) &\simeq \Db\bigl(\rep^{-,b}(\widetilde Q^{\op})[\Sigma^{-1}]\bigr)/G,\\ \Dsg^{b}(\C^{\op}) &\simeq \Db\bigl(\rep^{+,b}(\widetilde Q^{\op})[\Sigma^{-1}]\bigr)/H,\\ \Dsg^{-}(\proj\text{-}\C) &\simeq \left( \Db\bigl(\rep^{-}(\widetilde Q^{\op})\bigr) \big/ \Db\bigl(\rep^{-,b}(\widetilde Q^{\op})\bigr) \right)/G,\\ \Dsg^{+}(\inj\text{-}\C) &\simeq \left( \Db\bigl(\rep^{+}(\widetilde Q^{\op})\bigr) \big/ \Db\bigl(\rep^{+,b}(\widetilde Q^{\op})\bigr) \right)/H. \end{align*} In particular, $\Dsg^{b}(\C)$ and $\Dsg^{b}(\C^{\op})$ are semisimple abelian categories. \end{theorem} The second and fourth equivalences are dual to the first and third, respectively. The proof of the first equivalence relies on three ingredients: a characterization of finitely copresented representations, the semisimplicity of the corresponding localization, and the construction of a Galois covering functor onto the singularity category. As a consequence, every object of $\Dsg^{b}(\C)$ or $\Dsg^{b}(\C^{\op})$ decomposes as a finite direct sum of shifts of simple modules. This gives an effective method for computing generators directly from the quiver.
\section{Preliminaries and background}

We recall the standard material needed below. For quiver representations and locally bounded categories we refer to \cite{BLP2013,BL2014,BL2017}; for Gabriel--Zisman localization and localization of derived categories we use \cite{GZ1967,Miyachi1991}; and for orbit categories and Galois coverings we use \cite{Keller2005,AHV2018}.

\subsection{Quivers}

A quiver $Q=(Q_0,Q_1,s,t)$ consists of a set $Q_0$ of vertices, a set $Q_1$ of arrows, and source and target maps
\[
s,t\colon Q_1\longrightarrow Q_0.
\]
For an arrow $\alpha\colon x\to y$, its formal inverse $\alpha^{-1}$ has source $y$ and target $x$. A walk is a composable sequence of arrows, formal inverses, and trivial paths. A path is a walk consisting only of arrows. We write $Q_n(x,y)$ for the set of paths of length $n$ from $x$ to $y$, and
\[
x^{+}=\{\alpha\in Q_1\mid s(\alpha)=x\},
\qquad
x^{-}=\{\alpha\in Q_1\mid t(\alpha)=x\}.
\]
The quiver $Q$ is \emph{locally finite} if $x^{+}$ and $x^{-}$ are finite for every $x\in Q_0$. It is \emph{strongly locally finite} if it is locally finite and $Q_n(x,y)$ is finite for all $x,y\in Q_0$ and $n\geq 0$.

For a walk $w$, define its degree by assigning degree $1$ to each arrow, degree $-1$ to each formal inverse, and degree $0$ to each trivial path, and extending additively under concatenation. Thus every path has degree equal to its length. The quiver $Q$ is \emph{gradable} if every closed walk has degree zero. In that case there is a decomposition
\[
Q_0=\coprod_{n\in\mathbb Z}Q_n
\]
such that every arrow goes from $Q_n$ to $Q_{n+1}$. A gradable locally finite quiver is strongly locally finite. Moreover, $Q^{\op}$ is gradable with grading $(Q^{\op})_n=Q_{-n}$.

A morphism of quivers $\psi\colon Q\to Q'$ consists of maps $\psi_0\colon Q_0\to Q'_0$ and $\psi_1\colon Q_1\to Q'_1$ preserving sources and targets. An action of a group $G$ on $Q$ is free if $g\cdot x=x$ implies $g=1$. A morphism $\pi\colon Q\to Q'$ is a Galois $G$-covering if it is surjective on vertices, $\pi\circ g=\pi$ for all $g\in G$, every fibre is a $G$-orbit, and the induced maps
\[
x^{+}\longrightarrow \pi(x)^{+},
\qquad
x^{-}\longrightarrow \pi(x)^{-}
\]
are bijective for every $x\in Q_0$.

\subsection{Path algebras and path categories}

The path algebra $\kk Q$ is the $\kk$-vector space with basis the paths in $Q$, with multiplication given by concatenation whenever defined and zero otherwise. If $Q$ has infinitely many vertices, $\kk Q$ need not be unital; it is therefore often preferable to work with the path category. Its objects are the vertices of $Q$, and its morphism spaces are
\[
\kk Q(x,y)=\kk\text{-span}\{\text{paths from }x\text{ to }y\}.
\]

A family $I=(I(x,y))_{x,y\in Q_0}$ is an admissible ideal if it is a two-sided ideal, if $I(x,y)\subseteq (\kk Q^{+})^{2}(x,y)$, and if, for each vertex $x$, there exists $n\geq 2$ such that
\[
(\kk Q^{+})^{n}(x,-)\subseteq I(x,-),
\qquad
(\kk Q^{+})^{n}(-,x)\subseteq I(-,x).
\]
The quotient category $\kk Q/I$ has the same objects as $\kk Q$ and morphism spaces
\[
(\kk Q/I)(x,y)=\kk Q(x,y)/I(x,y).
\]

A small $\kk$-linear category $\C$ is locally bounded if distinct objects are non-isomorphic, every endomorphism algebra $\C(x,x)$ is local, and
\[
\bigoplus_{y\in Q_0}\C(x,y)\oplus\C(y,x)
\]
is finite-dimensional for every $x$. It is elementary if all simple $\C$-modules are one-dimensional over $\kk$. A bound path category $\kk Q/I$ is locally bounded precisely when $Q$ is locally finite and $I$ is admissible. Hence an elementary locally bounded category with radical square zero can be written as
\[
\C=\kk Q/(\kk Q^{+})^{2}.
\]
Throughout the remainder of the paper, $\C$ denotes such a category.

\subsection{Representations and modules}

A left $\C$-module is a $\kk$-linear covariant functor $\C\to\kk\text{-}\modu$. Its support is
\[
\supp M=\{x\in Q_0\mid M(x)\neq 0\}.
\]
We write $\C\text{-}\modu$ for the category of finitely generated left $\C$-modules. The additive subcategories generated by the finitely generated projectives $\C(x,-)$ and the finitely generated injectives $D\C^{\op}(x,-)$ are denoted by $\C\text{-}\proj$ and $\C\text{-}\inj$, respectively. Every finitely generated module has both a projective cover and an injective envelope.

A representation $M$ of $Q$ consists of finite-dimensional vector spaces $M(x)$ for $x\in Q_0$ and linear maps $M(\alpha)\colon M(x)\to M(y)$ for each arrow $\alpha\colon x\to y$. We denote by $\rep^{b}(Q)$ the category of representations with finite support. The socle of $M$ is the subrepresentation defined by
\[
(\soc M)(x)=\bigcap_{\alpha\in x^{+}}\Ker M(\alpha).
\]
For each $a\in Q_0$, let $S_a$ denote the simple representation concentrated at $a$, and let $I_a$ and $P_a$ denote the corresponding indecomposable injective and projective representations.

A representation $M$ is finitely copresented if it admits an exact sequence
\[
0\longrightarrow M\longrightarrow I\longrightarrow J\longrightarrow 0
\]
with $I,J\in\inj(Q)$. The category of finitely copresented representations is denoted by $\rep^{-,b}(Q)$. Dually, $\rep^{+,b}(Q)$ denotes the category of finitely presented representations. Finally, $\rep^{-}(Q)$ and $\rep^{+}(Q)$ denote the categories of representations whose supports contain no right-infinite path and no left-infinite path, respectively.

\subsection{Localization and singularity categories}

Let $\A$ be an abelian category and $\B\subseteq\A$ a Serre subcategory. Let $\Sigma$ be the class of morphisms $s$ in $\A$ such that $\Ker s$ and $\Coker s$ belong to $\B$. The Gabriel--Zisman localization $\A[\Sigma^{-1}]$ is abelian, and the canonical functor
\[
P_{\Sigma}\colon \A\longrightarrow \A[\Sigma^{-1}]
\]
is exact. Morphisms in the localization are represented by roofs
\[
X\xrightarrow{f}Z\xleftarrow{s}Y,
\qquad s\in\Sigma.
\]

Let $\A$ be an additive subcategory of an abelian category. Following Zhou and Zimmermann, define
\begin{align*}
\Dsg^{-,b}(\A)&=\K^{-,b}(\A)/\K^{b}(\A),\\
\Dsg^{+,b}(\A^{\op})&=\K^{+,b}(\A)/\K^{b}(\A),\\
\Dsg^{-}(\A)&=\K^{-}(\A)/\K^{b}(\A),\\
\Dsg^{+}(\A)&=\K^{+}(\A)/\K^{b}(\A).
\end{align*}
When $\A=\proj\text{-}\B$, one recovers the usual bounded singularity category
\[
\Dsg^{b}(\B)=\Db(\B)/\K^{b}(\proj\text{-}\B).
\]
In particular, $\Dsg^{b}(\C)=0$ if and only if $\C$ has finite global dimension.

\subsection{Galois covering functors}

All categories below are assumed to be skeletally small and $\kk$-linear. Let a group $G$ act on a $\kk$-category $\A$. The action is free if $gX\not\simeq X$ for every non-identity $g\in G$, and locally bounded if
\[
\Hom_{\A}(X,gY)=0
\]
for all but finitely many $g\in G$, for every pair $X,Y\in\A$.

A $\kk$-linear functor $E\colon\A\to\B$ is $G$-stable if it is equipped with natural isomorphisms
\[
\delta_g\colon E\circ g\xrightarrow{\sim}E
\]
satisfying the usual cocycle condition. It is a Galois $G$-precovering if, for all $X,Y\in\A$, the map
\[
\bigoplus_{g\in G}\Hom_{\A}(X,gY)
\longrightarrow
\Hom_{\B}(E(X),E(Y)),
\qquad
(u_g)_{g\in G}\longmapsto\sum_{g\in G}\delta_{g,Y}E(u_g),
\]
is an isomorphism. A dense Galois $G$-precovering is called a Galois $G$-covering.

If $G=\langle F\rangle$ is cyclic, the orbit category $\A/G$ has the same objects as $\A$ and morphism spaces
\[
\Hom_{\A/G}(X,Y)=\bigoplus_{n\in\mathbb Z}\Hom_{\A}(X,F^{n}Y).
\]
Even when $\A$ is triangulated, the orbit category need not inherit a triangulated structure automatically.

\section{Main results}

We assume throughout that representations of $Q^{\op}$ are locally finite with respect to the grading, namely
\[
(Q^{\op})_n\cap\supp M
\quad\text{is finite for every }n\in\mathbb Z.
\]

\begin{lemma}\label{lem:finitely-copresented}
Let $Q$ be a gradable locally finite quiver and let $M\in\rep(Q^{\op})$. The following conditions are equivalent:
\begin{enumerate}[label=\textup{(\arabic*)}]
\item there exists a short exact sequence
\[
0\longrightarrow N\longrightarrow M\longrightarrow L\longrightarrow 0
\]
with $N\in\rep^{b}(Q^{\op})$ and $L\in\inj(Q^{\op})$;
\item $M$ is finitely copresented.
\end{enumerate}
\end{lemma}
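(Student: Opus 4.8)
The plan is to begin by fixing, using gradability, a grading $d\colon Q_0\to\Z$ with $d(b)=d(a)+1$ for every arrow $a\to b$ of $Q$. Then $Q$ is acyclic, $\Rep(Q^{op})$ is hereditary, every path $v\to w$ of $Q$ has length exactly $d(w)-d(v)$, and --- by local finiteness --- each indecomposable injective $I_v$ of $rep(Q^{op})$ is pointwise finite dimensional, is supported on vertices of degree $\ge d(v)$, and meets every degree in only finitely many vertices. Two elementary facts will be used repeatedly: (a) for a representation $X$ and a vertex $w$, the minimal projective resolution $0\to\operatorname{rad}(P_w)\to P_w\to S_w\to0$ identifies $\Ext^1(S_w,X)$ with the cokernel of the structure map $X_w\to\bigoplus_{\alpha\colon w\to w'\text{ in }Q^{op}}X_{w'}$, and an injective envelope never enlarges the socle, i.e.\ $\operatorname{soc}(E(X))=\operatorname{soc}(X)$; (b) if the support of $X$ is bounded below for $d$ and $\operatorname{soc}(X)=0$, then $X=0$, since a vertex of least degree in the support would have to lie in the socle.

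For $(1)\Rightarrow(2)$, suppose $0\to N\xrightarrow{\iota}M\xrightarrow{\pi}L\to 0$ with $N\in rep^{b}(Q^{op})$ and $L\in inj(Q^{op})$. As $\operatorname{soc}(N)$ is finite dimensional, $E(N)$ is a finite direct sum of indecomposable injectives; extending $\iota$ along the injective envelope $j\colon N\hookrightarrow E(N)$ gives $g\colon M\to E(N)$ with $g\iota=j$, and $(g,\pi)\colon M\to E(N)\oplus L$ is then a monomorphism into $inj(Q^{op})$ whose cokernel, since $\pi$ is onto, a short diagram chase identifies with $E(N)/N$. The key step is that $\operatorname{soc}(E(N)/N)$ is finite dimensional: I would argue by degree, observing that if $d(w)>1+\max\{d(v):v\in\operatorname{supp}(N)\}$ and $\bar x\in(E(N)/N)_w$ lies in the socle of the quotient, then a lift $x\in E(N)_w$ has all of its $Q^{op}$-images inside $N$ at vertices of degree $d(w)-1$, where $N$ vanishes, so $x\in\operatorname{soc}(E(N))=\operatorname{soc}(N)\subseteq N$ and $\bar x=0$; hence $\operatorname{soc}(E(N)/N)$ is confined to finitely many degrees, each with finitely many vertices, and is finite dimensional. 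Embedding $E(N)/N$ into its (now finite) injective envelope then produces a copresentation $0\to M\to E(N)\oplus L\to E(E(N)/N)$ with both terms in $inj(Q^{op})$, so $M$ is finitely copresented.

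For $(2)\Rightarrow(1)$, fix a copresentation $0\to M\to I^0\to I^1$ with $I^0,I^1$ finite direct sums of indecomposable injectives, so $M\hookrightarrow I^0$ is pointwise finite dimensional with finitely many support vertices per degree and support bounded below, and both $\operatorname{soc}(M)\subseteq\operatorname{soc}(I^0)$ and the cosyzygy $C:=I^0/M\hookrightarrow I^1$ have finite-dimensional socle. Choose $D$ strictly larger than every degree occurring in $\operatorname{supp}(\operatorname{soc}(M))\cup\operatorname{supp}(\operatorname{soc}(C))$, set $N=M_{\le D}$ (the subrepresentation carried by the vertices of degree $\le D$) and $L=M/N=M_{>D}$; then $N$ has support in finitely many degrees with finitely many vertices each, so $N\in rep^{b}(Q^{op})$. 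The main obstacle is to show that $L$ is an honest finite injective, not merely injective along its bottom layer. For this I would check, via (a), that $\Ext^1(S_w,L)=0$ at every vertex $w$: if $d(w)>D+1$ the cokernel formula makes this agree with $\Ext^1(S_w,M)$, which is a quotient of $\Hom(S_w,C)=0$ by the long exact sequence of $0\to M\to I^0\to C\to0$ together with the choice of $D$; if $d(w)\le D+1$ the relevant spaces of $L$ vanish. It then follows that $\operatorname{soc}(L)=L_{D+1}$ is finite dimensional, so $E(L)=\bigoplus_{d(w)=D+1}I_w^{\dim M_w}\in inj(Q^{op})$; and applying (a) and then (b) to $E(L)/L$ --- supported in degrees $\ge D+2$, with socle a quotient of $\operatorname{soc}(E(L))$ which is concentrated in degree $D+1$, hence zero --- forces $E(L)=L$. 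Therefore $0\to N\to M\to L\to0$ is the sequence sought. Of the two implications it is $(2)\Rightarrow(1)$ that carries the weight, and gradability together with local finiteness of $Q$ are exactly what make the $\Ext^1$-vanishing computation go through.
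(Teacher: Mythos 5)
Your proof is correct, but it follows a genuinely different route from the paper's. The paper proves both implications by passing through the intermediate condition ``the complex $F(M)^{\cdot}$ has bounded cohomology,'' where $F$ is the Bautista--Liu functor sending a representation of $Q^{op}$ to a complex of projective $\mathscr{C}$-modules; the subrepresentation $N$ and the injective quotient $L$ are then extracted from the largest degree $l$ with $H^{l}(F(M))\neq 0$. You instead stay entirely inside $rep(Q^{op})$: for $(1)\Rightarrow(2)$ a pushout of $M$ along $N\hookrightarrow E(N)$ together with a degree estimate on $\operatorname{soc}(E(N)/N)$, and for $(2)\Rightarrow(1)$ the truncation $N=M_{\le D}$, $L=M_{>D}$ at a degree $D$ past the (finite) socles of $M$ and of its cosyzygy $C=I^{0}/M$, with injectivity of $L$ forced by the vanishing of $\Ext^{1}(S_{w},L)$ and the ``least degree lies in the socle'' principle. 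Both arguments are sound; yours is more elementary and self-contained, while the paper's version has the advantage that the equivalence with bounded cohomology of $F(M)$ is precisely what gets reused in Lemma 2.5 to relate $rep^{-,b}(\tilde{Q}^{op})$ to $K^{b}(proj$-$\mathscr{C})$, so with your proof that link would still have to be established separately. Two small points to make explicit: the finiteness of $\operatorname{soc}(E(N)/N)$ needs the lower bound on its degrees coming from $\operatorname{supp}(E(N))$ being bounded below (your upper-degree argument alone confines the socle to a half-line, not a finite band), and since the paper's definition of ``finitely copresented'' is a short exact sequence $0\to M\to I\to J\to 0$ with $J\in inj(Q^{op})$, you should add that the cokernel $E(N)/N$ of your monomorphism is itself a quotient of an injective in a hereditary category, hence injective, and has finite essential socle, hence lies in $inj(Q^{op})$.
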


\begin{proof}
Let $F$ be the functor introduced by Bautista and Liu. For $M\in\rep(Q^{\op})$, the complex $F(M)$ has components
\[
F(M)^{n}=\bigoplus_{x\in Q_{-n}}\C(x,-)\otimes_{\kk}M(x)
\]
and differential determined by the matrices
\[
d_{F(M)}^{n}(y,x)
=
\sum_{\alpha\in Q_1(y,x)}
\C(\alpha,-)\otimes M(\alpha^{\op}).
\]
We show that condition~\textup{(1)} is equivalent to boundedness of the cohomology of $F(M)$, and that the latter is equivalent to finite copresentation.

Assume first that $F(M)$ has bounded cohomology. Since $F(M)$ is bounded above, the support of $M$ contains no right-infinite path. Hence $\soc M$ is essential. Let $\ell$ be the largest integer for which $H^{\ell}(F(M))\neq 0$. A direct inspection of the differential shows that $\soc M$ vanishes in all sufficiently low degrees. Consequently, $\soc M$ has finite support, and $M$ is finitely cogenerated. Let $M\hookrightarrow J$ be an injective envelope. Truncating $J$ along the grading produces an injective representation $L$ and a finite-support subrepresentation $N$ fitting into an exact sequence
\[
0\longrightarrow N\longrightarrow M\longrightarrow L\longrightarrow 0.
\]
This proves condition~\textup{(1)}.

Conversely, suppose that such a short exact sequence exists. Applying $F$ gives a long exact cohomology sequence. Since $N$ has finite support and $L$ is injective, both $F(N)$ and $F(L)$ have bounded cohomology; therefore so does $F(M)$.

Finally, if $M$ is finitely copresented, there is an exact sequence
\[
0\longrightarrow M\longrightarrow I\longrightarrow J\longrightarrow 0
\]
with $I,J$ injective. Since $F(I)$ and $F(J)$ have bounded cohomology, the same is true of $F(M)$. The preceding argument then yields condition~\textup{(1)}.
\end{proof}

Let
\[
\Sigma=
\left\{
 s\in\rep^{-,b}(Q^{\op})
 \;
 \middle|
 \;
 \Ker s,\Coker s\in\rep^{b}(Q^{\op})
\right\}.
\]

\begin{lemma}\label{lem:semisimple-localization}
Let $Q$ be a gradable locally finite quiver. Then
\[
\rep^{-,b}(Q^{\op})[\Sigma^{-1}]
\]
is a semisimple abelian category.
\end{lemma}

\begin{proof}
The subcategory $\rep^{b}(Q^{\op})$ is a Serre subcategory of $\rep^{-,b}(Q^{\op})$, so the localization is abelian. By lemma 2.1, every object of $\rep^{-,b}(Q^{\op})$ becomes isomorphic in the localization to a finite direct sum of injective representations.

Consider a short exact sequence
\[
0\longrightarrow A\xrightarrow{f}B\xrightarrow{g}C\longrightarrow 0
\]
in the localized category. We may assume that $A$, $B$, and $C$ are represented by injective objects. Represent $f$ by a roof
\[
A\xrightarrow{\alpha}Z\xleftarrow{\beta}B,
\]
where $\Ker\beta$, $\Coker\beta$, and $\Ker\alpha$ have finite support. Since $\rep^{-,b}(Q^{\op})$ is hereditary and $A$ is injective, $\Im\alpha$ is injective. Hence
\[
0\longrightarrow \Im\alpha\xrightarrow{i}Z\xrightarrow{p}\Coker\alpha\longrightarrow 0
\]
splits. In the localization, this sequence is isomorphic to the original short exact sequence, as expressed by the commutative diagram
\[
\begin{tikzcd}
0 \arrow[r] & \Im\alpha \arrow[r,"i"] \arrow[d,"\sim"']
& Z \arrow[r,"p"] \arrow[d,"\beta^{-1}"]
& \Coker\alpha \arrow[r] \arrow[d,"\sim"] & 0\\
0 \arrow[r] & A \arrow[r,"f"'] & B \arrow[r,"g"'] & C \arrow[r] & 0.
\end{tikzcd}
\]
Therefore every short exact sequence splits, and the localization is semisimple.

\end{proof}

Let $r_Q$ be the grading period of $Q$, namely the least positive degree of a closed walk; by convention, $r_Q=0$ when $Q$ is gradable. Let $\widetilde Q$ be a gradable connected component of the repetition quiver $Q^{\mathbb Z}$. The translation
\[
\rho(x,n)=(x,n+r_Q)
\]
generates a torsion-free group, and the canonical map
\[
\pi\colon\widetilde Q\longrightarrow Q,
\qquad
\pi(x,n)=x,
\]
is the minimal gradable covering. Set
\[
\widetilde\C=\kk\widetilde Q^{\op}/(\kk\widetilde Q^{+})^{2}.
\]
The group generated by $\vartheta=\rho[-r_Q]$ acts naturally on
\[
\Db\bigl(\rep^{-,b}(\widetilde Q^{\op})[\Sigma^{-1}]\bigr).
\]

\begin{theorem}\label{lem:galois-covering}
Let $Q$ be a locally finite quiver. There exists a Galois covering functor
\[
\Db\bigl(\rep^{-,b}(\widetilde Q^{\op})[\Sigma^{-1}]\bigr)
\longrightarrow
\Dsg^{b}(\C).
\]
If $Q$ is gradable, this functor is a triangle equivalence.
\end{theorem}

\begin{proof}
We divide the proof into three steps.

\smallskip
\noindent\emph{Step 1: the gradable case.}
Let
\[
\Db_{\rep^{b}(\widetilde Q^{\op})}
\bigl(\rep^{-,b}(\widetilde Q^{\op})\bigr)
\]
denote the full subcategory of complexes whose cohomology objects have finite support. Since $\rep^{-}(\widetilde Q^{\op})$ is hereditary, the natural functor
\[
\Db\bigl(\rep^{b}(\widetilde Q^{\op})\bigr)
\longrightarrow
\Db_{\rep^{b}(\widetilde Q^{\op})}
\bigl(\rep^{-,b}(\widetilde Q^{\op})\bigr)
\]
is an equivalence. The derived equivalence of Bautista and Liu, followed by the projection to the singularity category, annihilates this thick subcategory. Hence it factors through a functor
\[
G\colon
\Db\bigl(\rep^{-,b}(\widetilde Q^{\op})\bigr)
\big/
\Db\bigl(\rep^{b}(\widetilde Q^{\op})\bigr)
\longrightarrow
\Dsg^{b}(\widetilde\C).
\]
Using the calculus of fractions and the derived equivalence, one verifies that $G$ is full, faithful, and dense. By Miyachi's localization theorem \cite{Miyachi1991},
\[
\Db\bigl(\rep^{-,b}(\widetilde Q^{\op})\bigr)
\big/
\Db\bigl(\rep^{b}(\widetilde Q^{\op})\bigr)
\simeq
\Db\bigl(\rep^{-,b}(\widetilde Q^{\op})[\Sigma^{-1}]\bigr).
\]
Thus
\[
\Db\bigl(\rep^{-,b}(\widetilde Q^{\op})[\Sigma^{-1}]\bigr)
\simeq
\Dsg^{b}(\widetilde\C).
\]

\smallskip
\noindent\emph{Step 2: passage from $\widetilde\C$ to $\C$.}
The minimal gradable covering induces a commutative square
\[
\begin{tikzcd}
\Db(\widetilde\C) \arrow[r] \arrow[d,"\pi^{D}"']
& \Dsg^{b}(\widetilde\C) \arrow[d,"\pi^{S}"]\\
\Db(\C) \arrow[r]
& \Dsg^{b}(\C).
\end{tikzcd}
\]
The functor $\pi^{D}$ is a Galois covering. The induced functor $\pi^{S}$ is dense and $G$-stable, and it satisfies
\[
\bigoplus_{k\in\mathbb Z}
\Hom_{\Dsg^{b}(\widetilde\C)}(X,\rho^{k}Y)
\xrightarrow{\sim}
\Hom_{\Dsg^{b}(\C)}(\pi^{S}X,\pi^{S}Y).
\]
Hence $\pi^{S}$ is a Galois covering.

\smallskip
\noindent\emph{Step 3: composition.}
The equivalence of Step~1 intertwines the action of $\vartheta=\rho[-r_Q]$ with the action induced by $\rho$ on $\Dsg^{b}(\widetilde\C)$, up to the standard sign twist on complexes. Consequently, the composite
\[
\Db\bigl(\rep^{-,b}(\widetilde Q^{\op})[\Sigma^{-1}]\bigr)
\longrightarrow
\Dsg^{b}(\widetilde\C)
\longrightarrow
\Dsg^{b}(\C)
\]
is a Galois covering. If $Q$ is gradable, then $r_Q=0$ and the covering group is trivial, so the functor is an equivalence.
\end{proof}

\begin{theorem}\label{thm:main}
Assume that $\C$ is locally bounded and $\rad^{2}\C=0$. Let $G=\langle\vartheta\rangle$ and $H=\langle\varphi\rangle$, where
\[
\vartheta=\rho[-r_Q],
\qquad
\varphi=\rho^{-1}[r_Q].
\]
Then there are triangle equivalences
\begin{align*}
\Dsg^{b}(\C)
&\simeq \Db\bigl(\rep^{-,b}(\widetilde Q^{\op})[\Sigma^{-1}]\bigr)/G,\\
\Dsg^{b}(\C^{\op})
&\simeq \Db\bigl(\rep^{+,b}(\widetilde Q^{\op})[\Sigma^{-1}]\bigr)/H,\\
\Dsg^{-}(\proj\text{-}\C)
&\simeq
\left(
\Db\bigl(\rep^{-}(\widetilde Q^{\op})\bigr)
\big/
\Db\bigl(\rep^{-,b}(\widetilde Q^{\op})\bigr)
\right)/G,\\
\Dsg^{+}(\inj\text{-}\C)
&\simeq
\left(
\Db\bigl(\rep^{+}(\widetilde Q^{\op})\bigr)
\big/
\Db\bigl(\rep^{+,b}(\widetilde Q^{\op})\bigr)
\right)/H.
\end{align*}
In particular, $\Dsg^{b}(\C)$ and $\Dsg^{b}(\C^{\op})$ are semisimple abelian categories.
\end{theorem}

\begin{proof}
The first equivalence follows from \cref{lem:galois-covering}. The orbit category inherits a translation functor, and we transport the distinguished triangles from $\Dsg^{b}(\C)$ across the equivalence of additive categories. This gives the orbit category a triangulated structure for which the equivalence is exact. Since the localized representation category is semisimple abelian, so is the resulting orbit category.

The third equivalence is obtained by extending the Bautista--Liu equivalence to the bounded-above derived category and repeating the same localization and covering argument. The second and fourth equivalences follow by duality.
\end{proof}

\begin{corollary}\label{cor:decomposition}
The following statements hold.
\begin{enumerate}[label=\textup{(\arabic*)}]
\item Every object of $\Dsg^{b}(\C)$ or $\Dsg^{b}(\C^{\op})$ is a finite direct sum of shifts of semisimple modules.
\item For every vertex $a\in Q_0$, the simple module $S_a$ satisfies
\[
S_a\simeq
\bigoplus_{\alpha\in a^{+}}S_{t(\alpha)}[1]
\qquad\text{in }\Dsg^{b}(\C).
\]
\item Dually,
\[
S_b\simeq
\bigoplus_{\alpha\in b^{-}}S_{s(\alpha)}[1]
\qquad\text{in }\Dsg^{b}(\C^{\op}).
\]
\end{enumerate}
\end{corollary}

\begin{proof}
Let $Z\in\Dsg^{b}(\C)$. By the Galois covering, $Z$ lifts to an object of
\[
\Db\bigl(\rep^{-,b}(\widetilde Q^{\op})[\Sigma^{-1}]\bigr).
\]
Since the localized category is semisimple, the lift is a finite direct sum of shifts of injective representations. Under the equivalence with the singularity category of $\widetilde\C$, these injectives correspond to shifts of simple modules. The covering functor preserves simples, proving~\textup{(1)}.

For~\textup{(2)}, choose $b\in\widetilde Q_0$ with $\pi(b)=a$. In $\rep^{-}(\widetilde Q^{\op})$ there is an exact sequence
\[
0\longrightarrow S_b\longrightarrow I_b
\longrightarrow
\bigoplus_{\beta\in b^{+}}I_{t(\beta)}
\longrightarrow 0.
\]
The finite-support term becomes zero after localization, and the claimed relation follows after applying the equivalence and descending along the covering. Statement~\textup{(3)} is dual.
\end{proof}

\section{Examples}

The preceding corollary gives a direct procedure for computing generators of $\Dsg^{b}(\C)$ from the quiver.

\begin{example}
Suppose that a portion of $Q$ contains arrows from a vertex $3$ to vertices $1$ and $2$. Choose lifts $a_1,a_2,a_3\in\widetilde Q_0$ with $\pi(a_i)=i$. In $\widetilde Q^{\op}$ there is an exact sequence
\[
0\longrightarrow S_{a_3}
\longrightarrow I_{a_3}
\longrightarrow I_{a_1}\oplus I_{a_2}
\longrightarrow 0.
\]
Since $S_{a_3}$ has finite support, it vanishes in the localization. Hence
\[
I_{a_3}\simeq I_{a_1}\oplus I_{a_2},
\]
and therefore
\[
S_3\simeq S_1[1]\oplus S_2[1]
\qquad\text{in }\Dsg^{b}(\C).
\]
Applying the same argument successively to the remaining vertices expresses every simple module in terms of a smaller generating family. In the quiver considered in the original example, all objects are finite direct sums of shifts of $S_{12}$ tensored with finite-dimensional $\kk$-vector spaces.
\end{example}

\begin{example}
Let $Q=A_{\infty}^{\infty}$ be the doubly infinite linearly oriented quiver
\[
\cdots\longrightarrow \bullet\longrightarrow \bullet\longrightarrow \bullet\longrightarrow\cdots.
\]
Then every object of $\Dsg^{b}(\C)$ is of the form
\[
\bigoplus_i S_a[t_i]\otimes_{\kk}V_i
\]
for some vertex $a$, integers $t_i$, and finite-dimensional vector spaces $V_i$.
\end{example}

\begin{example}
Let $Q$ be a finite quiver containing an oriented cycle. Its minimal gradable covering is an infinite path. Consequently, the preceding example applies, and every object of $\Dsg^{b}(\C)$ is a finite direct sum of shifts of one simple module, tensored with finite-dimensional vector spaces.
\end{example}

The vector spaces occurring in these decompositions are generated by suitable families of paths, as in the proof of lemma 2.1.

\end{document}